\newtheorem{theorem}{Theorem}
\newtheorem{definition}[theorem]{Definition}
\newtheorem{proposition}[theorem]{Proposition}
\newtheorem{lemma}[theorem]{Lemma}
\newcommand{\I}{{\mathbf{I}}}
\begin{document}

\noindent
\textbf{Proc.\ Ninth EMI Conf.\ (2023) pp.\ 49--58}

\noindent
\textbf{DOI 10.5281/zenodo.10428478}

\vspace{15mm}

\begin{center}
{\Large \textbf{ON THE CONSTRUCTION} \\

\bigskip

\textbf{OF COHN'S UNIVERSAL LOCALIZATION}}

\bigskip

\textbf{JOHN A. BEACHY}

\bigskip

\textbf{Abstract}

\end{center}

{\small 

\baselineskip=12pt

\begin{quote}
For an associative ring $R$ 
we investigate a construction 
of Cohn's universal ring of fractions $R_{\Sigma}$,
defined relative to a multiplicative set $\Sigma$ of matrices.
The construction of $R_{\Sigma}$
avoids the Ore condition,
which is necessary to construct a ring of fractions
relative to a multiplicative set of elements of $R$.
But a similar condition,
which we call the ``pseudo-Ore'' condition,
plays an important role in the construction of $R_{\Sigma}$.
We show that this condition in fact determines 
the equivalence relation used in the construction of $R_{\Sigma}$,
and provides information about left $R_{\Sigma}$-modules.
\end{quote} }

\vspace{2mm}

\baselineskip=14pt

Throughout this paper,
$R$ will denote an associative ring with identity $1$
that is not necessarily commutative,
and $X$ will denote a unital left $R$-module.
The major new results are given in 
Theorems~\ref{eleven} and \ref{twelve}.
The first concerns the equivalence relation used to define $R_{\Sigma}$,
and the second characterizes the kernel of the canonical mapping
from $X$ to $R_{\Sigma} \otimes_R X$.

A subset $S \subset R$ is said to be a multiplicative set
if it contains $1$ and is closed under multiplication.
Correspondingly, a set $\Sigma$ of square matrices 
is said to be multiplicative if
it contains all permutation matrices over $R$,
is closed under multiplication (when defined), and
if $C,D \in \Sigma$, then
$\left[ \begin{array}{cc} C & A \\ 0 & D \end{array} \right] \in \Sigma$
for any matrix $A$ over $R$ of the appropriate size.
We denote the set of $n \times n$ matrices in $\Sigma$ by $\Sigma_n$.

It is shown by Cohn in \cite{COHN85} that 
given a multiplicative set $\Sigma$ of matrices over $R$,
there exists a ring $R_{\Sigma}$ and a ring homomorphism
$\lambda : R \rightarrow R_{\Sigma}$
that is universal with respect to inverting the matrices in $\Sigma$.
The ring $R_{\Sigma}$ is constructed 
by adjoining enough elements to invert the given matrices,
subject to the necessary relations.
As pointed out in \cite{COHN85},
this provides little information about $\ker (\lambda)$,
and to address this, 
other constructions have been given by
Malcolmson (see \cite{MALCOLMSON82})
and Gerasimov (see \cite{GERASIMOV82}).
Malcolmson's construction has been simplified 
by the present author in \cite{BEACHY93}.
It is that construction which will be investigated here,
as a way to provide further information 
about the kernels of the canonical mappings
$\lambda : R \rightarrow R_{\Sigma}$
and $\mu_X : X \rightarrow R_{\Sigma} \otimes_R X$.

\newpage

\topmargin=.0in
\pagestyle{myheadings}
\markboth{John A Beachy}{Universal localization}
\setcounter{page}{50}

The construction in \cite{BEACHY93} proceeds as follows.
Cohn has shown that each element of $R_{\Sigma}$ 
is an entry $e_i \lambda (C)^{-1} e_j^t$,
in a matrix of the form $\lambda (C)^{-1}$,
where $C \in \Sigma$, $e_i, e_j$ are unit row vectors,
and $e_j^t$ denotes the transpose of $e_j$.
Addition of triples requires us to model elements of the form 
$\lambda (a) \lambda (C)^{-1} \lambda (b)^t$,
where $a,b \in R^n$, and $C \in \Sigma_n$.
Since it is just as easy to construct a module of quotients,
we consider ordered triples $(a,C,x^t)$,
where $a \in R^n$, $C \in \Sigma_n$, and $x \in X^n$,
where $X$ is any unital left $R$-module.
Following Malcolmson's development in \cite{MALCOLMSON82},
we first define an addition on ordered triples.

\begin{definition} 
The sum of ordered triples $(a,C,x^t )$, $(b,D,y^t )$,
with $a \in R^n$, $C \in \Sigma_n$, $x \in X^n$ 
and $b \in R^m$, $D \in \Sigma_m$, $y \in X^m$ 
is defined by
$$
(a,C,x^t ) + (b,D,y^t ) 
= \left( [a \;\;\; b] ,
\left[ \begin{array}{cc} C & 0 \\ 0 & D \end{array} \right] ,
\left[ \begin{array}{r} x^t \\ y^t \end{array} \right] \right) \, .
$$
\end{definition}
 
The next step is to introduce the following equivalence relation
(see Definition~2.1 of \cite{BEACHY93})
under which the equivalence classes of ordered triples
form a commutative semigroup.

\begin{definition}  
Let $a,b \in R^n$, 
$C_1, C_2 \in {\Sigma}_n$, 
and $x, y \in X^n$.
If there exist invertible $n \times n$ matrices $U_1, U_2$ over $R$
such that $a = bU_1$, $y^t = U_2 x^t$,
and $C_2 U_1 = U_2 C_1$, 
then we write
$$
(a,C_1,x^t) \equiv (b,C_2,y^t) \, ,
$$
and we say that 
$(a,C_1,x^t)$ and $(b,C_2,y^t)$ are \emph{congruent} via $U_1, U_2$.
\end{definition}

\begin{lemma}[\cite{BEACHY93}]  
Under the congruence relation $\equiv$, 
addition of triples is commutative.
\end{lemma}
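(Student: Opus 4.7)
The plan is to exhibit an explicit pair of invertible matrices $U_1, U_2$ witnessing the congruence of the two orderings of the sum. Writing the sums out, we need
$$
\left( [a \;\; b], \left[ \begin{array}{cc} C & 0 \\ 0 & D \end{array} \right], \left[ \begin{array}{c} x^t \\ y^t \end{array} \right] \right) \equiv \left( [b \;\; a], \left[ \begin{array}{cc} D & 0 \\ 0 & C \end{array} \right], \left[ \begin{array}{c} y^t \\ x^t \end{array} \right] \right),
$$
which by Definition~2 amounts to producing invertible $(n+m) \times (n+m)$ matrices $U_1, U_2$ over $R$ satisfying $[a \;\; b] = [b \;\; a]\, U_1$, the column equation $\left[ \begin{array}{c} y^t \\ x^t \end{array} \right] = U_2 \left[ \begin{array}{c} x^t \\ y^t \end{array} \right]$, and the matrix equation $\left[ \begin{array}{cc} D & 0 \\ 0 & C \end{array} \right] U_1 = U_2 \left[ \begin{array}{cc} C & 0 \\ 0 & D \end{array} \right]$.

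The natural candidate is the block-permutation matrix $P = \left[ \begin{array}{cc} 0 & I_m \\ I_n & 0 \end{array} \right]$, viewed as an $(n+m) \times (n+m)$ matrix whose top-left and bottom-right zero blocks have sizes $m \times n$ and $n \times m$ respectively. I would take $U_1 = U_2 = P$. The first two conditions are then immediate from the description of $P$ as the map that interchanges an $n$-block with an $m$-block, and invertibility is clear because $P$ is a permutation matrix over $R$, with inverse the analogous block-swap in the reverse direction.

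The only substantive verification is the third condition, which reduces to a direct $2 \times 2$ block matrix computation: both products equal the anti-diagonal matrix $\left[ \begin{array}{cc} 0 & D \\ C & 0 \end{array} \right]$. The one point requiring care, and the only conceivable obstacle, is bookkeeping when $n \neq m$: the two diagonal matrices on the two sides of the proposed intertwining relation are partitioned by row/column block sizes $(n,m)$ and $(m,n)$ respectively, so each block of $P$ must be lined up with the intended block of the diagonal matrix it multiplies. Once this is attended to, no further algebraic content is required.
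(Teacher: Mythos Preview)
Your proposal is correct and essentially identical to the paper's proof: both take $U_1 = U_2 = \left[ \begin{array}{cc} 0 & \I_m \\ \I_n & 0 \end{array} \right]$ and verify the three defining conditions of $\equiv$ by direct block-matrix computation. The paper's write-up displays the three identities explicitly, while you summarize the middle one by noting that both sides equal $\left[ \begin{array}{cc} 0 & D \\ C & 0 \end{array} \right]$, but there is no substantive difference.
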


\begin{proof}
If $C \in \Sigma_n$ and $D \in \Sigma_m$, then
$$
(a,C,x^t ) + (b,D,y^t ) \equiv (b,D,y^t ) + (a,C,x^t )
$$
since
$$
[ a \;\; b ] = 
\begin{array}{cc} 
 [ b & a ] \\ 
     &    
\end{array}
\left[ \begin{array}{ll} 
     0   & \I_m \\ 
     \I_n & 0 
\end{array} \right] \; ,
$$
$$
\left[ \begin{array}{cc} 
      D & 0 \\ 
      0 & C  
\end{array} \right]
\left[ \begin{array}{ll} 
     0   & \I_m \\ 
     \I_n & 0 
\end{array} \right] 
=
\left[ \begin{array}{ll} 
     0   & \I_m \\ 
     \I_n & 0 
\end{array} \right]
\left[ \begin{array}{cc} 
      C & 0 \\ 
      0 & D
\end{array} \right] \; ,
$$
and 
$$
\left[ \begin{array}{c} 
     y^t \\ 
     x^t  
\end{array} \right]
=
\left[ \begin{array}{ll} 
     0   & \I_m \\ 
     \I_n & 0 
\end{array} \right]
\left[ \begin{array}{c} 
     x^t \\ 
     y^t  
\end{array} \right] \; ,
$$
where $\I_n$ and $\I_m$ are identity matrices
of the appropriate sizes.
\end{proof}

\newpage

We denote by $\Sigma^{-1} X_0$ the subsemigroup
generated by all triples of the form $(0,C,x^t)$ or $(b,D,0^t)$,
for $a \in R^n$, $C \in \Sigma_n$, $x \in X^n$ and all $n > 0$.
Since addition is commutative,
the elements of $\Sigma^{-1} X_0$ can be put in the form
$(0,C,x^t) + (b,D,0^t)$.

\begin{definition}  
Let $a \in R^n$, $C \in \Sigma_n$, $x \in X^n$ 
and $b \in R^m$, $D \in \Sigma_m$, $x \in X^m$.
If there exist $z_1, z_2 \in \Sigma^{-1} X_0$
such that $(a,C,x^t) + z_1 \equiv (b,D,y^t) + z_2$,
then we write
$$
(a,C,x^t) \sim (b,D,y^t) \, .
$$ 

\noindent
The equivalence classes of ordered triples 
under the equivalence relation $\sim$
will be denoted by $[a:C:x^t]$,
and $\Sigma^{-1} X$ will denote
the set of all such equivalence classes.
\end{definition}

Proposition~2.3 of \cite{BEACHY93} shows that $\sim$
defines a congruence on the semigroup of ordered triples,
and that $\Sigma^{-1} X$ is an abelian group.
With an appropriate multiplication,
it is then shown in \cite{BEACHY93} that $\Sigma^{-1} R$ 
is a ring isomorphic to the universal localization $R_{\Sigma}$,
and that $\Sigma^{-1} X$ is a left module over $R_{\Sigma}$
that is naturally isomorphic to 
$R_{\Sigma} \otimes_R X$.

To construct a ring of left fractions 
using a multiplicative set $S \subset R$,
we need to be able to replace any product $a_1 c_1^{-1}$
with a product $c_2^{-1} a_2$,
where $a_1, a_2 \in R$ and $c_1, c_2 \in S$
(see \cite{LAM98}).
This leads to the left Ore condition:
given $a_1 \in R$ and $c_1 \in S$,
there exist $a_2 \in R$ and $c_2 \in S$ 
such that $c_2 a_1 = a_2 c_1$.

Addition in $R_{\Sigma}$ does not require the left Ore condition.
However, by a fundamental result (see Lemma~2.4 of \cite{BEACHY93}), 
if $a \in R^m$, 
$C_1 \in {\Sigma}_n$, 
$C_2 \in {\Sigma}_m$, 
$x \in X^n$,
then
$$
(aA_1,C_1,x^t) \sim (a,C_2,A_2x^t)
$$
for any $m \times n$ matrices $A_1, A_2$ over $R$
such that $C_2 A_1 = A_2 C_1$. 
This looks like the left Ore condition,
since in $R_{\Sigma}$ we are replacing
$\lambda (A_1) \lambda (C_1)^{-1}$ by $\lambda (C_2)^{-1} \lambda (A_2)$.
We will use this condition to define a new relation.
Note the important point that the matrices $C_1, C_2$
may have different sizes.

\begin{definition} 
Let $a \in R^n$, 
$C_1 \in {\Sigma}_n$, 
$x \in X^n$ and
$b \in R^m$, 
$C_2 \in {\Sigma}_m$, 
$y \in X^m$.
Suppose that there exist $m \times n$ matrices $A_1, A_2$ over $R$
and factorizations $a = bA_1$, $y^t = A_2 x^t$.
If $C_2 A_1 = A_2 C_1$, 
then we write
$(a,C_1,x^t) \geq (b,C_2,y^t)$.

In this case, we say that $(a,C_1,x^t) \geq (b,C_2,y^t)$
via $A_1, A_2$.
\end{definition}

\begin{lemma} 
\label{six}
The relation $\geq$ is reflexive, transitive, and respects addition.
\end{lemma}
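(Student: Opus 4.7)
The plan is to verify the three properties directly from the definition, with each one reduced to a block-matrix computation. All three assertions should follow by exhibiting the appropriate matrices $A_1, A_2$ and checking that the required identities $a = bA_1$, $y^t = A_2 x^t$, and $C_2 A_1 = A_2 C_1$ hold.

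For reflexivity, given $(a,C_1,x^t)$ I would simply take $A_1 = A_2 = \I_n$; the three required identities are then trivial. For transitivity, suppose $(a,C_1,x^t) \geq (b,C_2,y^t)$ via $A_1,A_2$ and $(b,C_2,y^t) \geq (c,C_3,z^t)$ via $B_1,B_2$. I would then show that $(a,C_1,x^t) \geq (c,C_3,z^t)$ via $B_1 A_1, B_2 A_2$: the factorizations $a = c(B_1 A_1)$ and $z^t = (B_2 A_2) x^t$ follow immediately by composing the two given ones, and the commutation identity follows from the chain
$$
C_3 (B_1 A_1) = (C_3 B_1) A_1 = (B_2 C_2) A_1 = B_2 (C_2 A_1) = B_2 (A_2 C_1) = (B_2 A_2) C_1 \, .
$$

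For compatibility with addition, suppose $(a,C_1,x^t) \geq (b,C_2,y^t)$ via $A_1,A_2$ and $(a',C_1',x'^t) \geq (b',C_2',y'^t)$ via $A_1',A_2'$. The natural choice is to take the block-diagonal matrices
$$
\widetilde{A}_1 = \left[ \begin{array}{cc} A_1 & 0 \\ 0 & A_1' \end{array} \right]
\quad \text{and} \quad
\widetilde{A}_2 = \left[ \begin{array}{cc} A_2 & 0 \\ 0 & A_2' \end{array} \right] \, ,
$$
and to verify that these witness $(a,C_1,x^t) + (a',C_1',x'^t) \geq (b,C_2,y^t) + (b',C_2',y'^t)$. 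Using the definition of addition, the row-vector identity $[a\;\; a'] = [b\;\; b']\widetilde{A}_1$, the column-vector identity for the $X$-coordinates, and the block commutation
$$
\left[ \begin{array}{cc} C_2 & 0 \\ 0 & C_2' \end{array} \right] \widetilde{A}_1
= \widetilde{A}_2 \left[ \begin{array}{cc} C_1 & 0 \\ 0 & C_1' \end{array} \right]
$$
all reduce to the two hypotheses placed in the diagonal blocks.

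There is no serious obstacle here; the lemma is essentially bookkeeping once one selects the block-diagonal witnesses in the additivity step. The only point worth verbalizing is that reflexivity and transitivity do not require $C_1, C_2, C_3$ to have the same size, since the matrices $A_1, A_2, B_1, B_2$ are allowed to be rectangular of matching dimensions; thus the proof of transitivity does not collapse to the identity-matrix case.
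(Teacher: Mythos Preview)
Your proof is correct and follows essentially the same approach as the paper: identity matrices for reflexivity, composition of the witnessing matrices for transitivity, and block-diagonal witnesses for compatibility with addition. The only difference is that for ``respects addition'' the paper proves the one-sided version (adding the \emph{same} triple $(a_3,C_3,x_3^t)$ to both sides, using $\left[\begin{smallmatrix}A_i & 0\\ 0 & \I\end{smallmatrix}\right]$), whereas you prove the two-sided version directly; these are equivalent given transitivity and reflexivity, so this is a cosmetic variation rather than a different route.
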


\begin{proof}
Using the identity matrix,
it follows immediately that $\geq$ is reflexive.

To show that the transitive law holds, let
$(a_1, C_1, x_1^t) \geq (a_2, C_2, x_2^t)$ via $A_1, A_2$
and let $(a_2, C_2, x_2^t) \geq (a_3, C_3, x_3^t)$ via $B_2, B_3$.
Then $a_1 = a_2 A_1$, $x_2^t = A_2 x_1^t$, and $C_2 A_1 = A_2 C_1$
in the first case, and
$a_2 = a_3 B_2$, $x_3^t = B_3 x_2^t$, and $C_3 B_2 = B_3 C_2$ 
in the 

\newpage

\noindent
second case.  Substituting yields 
$a_1 = a_2 A_1 = a_3 (B_2 A_1)$,
$x_3^t = B_3 x_2^t = (B_3 A_2) x_1^t$,
and $C_3 (B_2 A_1) = (C_3 B_2) A_1 = (B_3 C_2) A_1 
= B_3 (C_2 A_1) = B_3 (A_2 C_1) = (B_3 A_2) C_1$.
It follows that $(a_1, C_1, x_1^t) \geq (a_3, C_3, x_3^t)$
via $B_2 A_1, B_3 C_2$,
showing that $\geq$ is a transitive relation.

To show that $\geq$ respects addition, suppose that
$(a_1, C_1, x_1^t) \geq (a_2, C_2, x_2^t)$ via $A_1, A_2$.
If $(a_3, C_3, x_3^t)$ is any ordered triple, then
$$
(a_1, C_1, x_1^t) + (a_3,C_3, x_3^t) 
\geq (a_2, C_2, x_2^t) + (a_3,C_3, x_3^t)
$$ via the matrices
$\left[ \begin{array}{cc} A_1 & 0 \\ 0 & \I \end{array} \right]$ and
$\left[ \begin{array}{cc} A_2 & 0 \\ 0 & \I \end{array} \right]$.
\end{proof}

The next proposition is Lemma~2.4 of \cite{BEACHY93}.
For the sake of completeness,
we include the proof here.

\begin{proposition}[Left pseudo-Ore condition] 
\label{seven}
Let $a \in R^n$, 
$C_1 \in {\Sigma}_n$, 
$x \in X^n$,
$b \in R^m$, 
$C_2 \in {\Sigma}_m$, 
$y \in X^m$.
If $(a,C_1,x^t) \geq (b,C_2,y^t)$,
then $[a:C_1:x^t] = [b:C_2:y^t]$.
\end{proposition}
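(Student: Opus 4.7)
The plan is to produce triples $z_1, z_2 \in \Sigma^{-1} X_0$ with $(a,C_1,x^t) + z_1 \equiv (b,C_2,y^t) + z_2$, which by the definition of $\sim$ immediately gives $[a:C_1:x^t] = [b:C_2:y^t]$.

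The strategic choice is $z_1 = (b,C_2,0^t)$, which lies in $\Sigma^{-1}X_0$ because its $X$-part is zero, and $z_2 = (0,C_1,x^t)$, which lies in $\Sigma^{-1}X_0$ because its $R$-part is zero. With these additions, both sums are $(n+m)$-sized triples built on the \emph{same} block-diagonal middle matrix $\left[\begin{array}{cc} C_1 & 0 \\ 0 & C_2 \end{array}\right]$: the left-hand side becomes $\left([a\;\;b],\,\left[\begin{array}{cc} C_1 & 0 \\ 0 & C_2 \end{array}\right],\,\left[\begin{array}{c} x^t \\ 0 \end{array}\right]\right)$ and the right-hand side becomes $\left([0\;\;b],\,\left[\begin{array}{cc} C_1 & 0 \\ 0 & C_2 \end{array}\right],\,\left[\begin{array}{c} x^t \\ y^t \end{array}\right]\right)$.

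For the $\equiv$ relation between these two sums I would exhibit the unipotent lower-triangular matrices $U_1 = \left[\begin{array}{cc} \I_n & 0 \\ A_1 & \I_m \end{array}\right]$ and $U_2 = \left[\begin{array}{cc} \I_n & 0 \\ A_2 & \I_m \end{array}\right]$ over $R$, both visibly invertible with inverse obtained by negating the lower-left block. The three defining conditions of $\equiv$ then unpack exactly into the three clauses of $\geq$: the row-vector identity $[a\;\;b] = [0\;\;b]\,U_1$ becomes $a = bA_1$; the column-vector identity $\left[\begin{array}{c} x^t \\ y^t \end{array}\right] = U_2 \left[\begin{array}{c} x^t \\ 0 \end{array}\right]$ becomes $y^t = A_2 x^t$; and the middle-matrix identity $\left[\begin{array}{cc} C_1 & 0 \\ 0 & C_2 \end{array}\right] U_1 = U_2 \left[\begin{array}{cc} C_1 & 0 \\ 0 & C_2 \end{array}\right]$ reduces, in its only nontrivial (lower-left) block, to $C_2 A_1 = A_2 C_1$.

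The main obstacle is guessing the correct pair of auxiliary triples and the correct pair of $U_i$. Naive attempts---using swap-permutation $U_i$, or pairing $(0,C_2,y^t)$ with $(0,C_1,x^t)$ across the two sides---typically fail condition (iii), or else force one to insert $C_1^{-1}$ or $C_2^{-1}$ into $U_1$ or $U_2$, which are not entries available in $R$. The elegance of the above choice is that by keeping a single $n|m$ block structure on both sides and putting $A_i$ in the off-diagonal slot of a unipotent lower-triangular $U_i$, the three axioms defining $\geq$ collapse onto the three axioms defining $\equiv$ in one step, with no further intermediate triples needed.
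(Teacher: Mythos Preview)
Your proof is correct and follows essentially the same strategy as the paper's: add a zero-$X$-part triple on one side and a zero-$R$-part triple on the other so that both sums sit over the same block-diagonal middle matrix, then exhibit a unipotent block-triangular pair $U_1,U_2$ whose three defining identities for $\equiv$ unwind to $a=bA_1$, $y^t=A_2x^t$, and $C_2A_1=A_2C_1$; the paper simply uses the block order $(C_2,C_1)$ with upper-triangular $U_i$, whereas you use $(C_1,C_2)$ with lower-triangular $U_i$, which is the same computation conjugated by a block-swap permutation. One minor slip: the triple you display as the right-hand side is literally $z_2+(b,C_2,y^t)$ rather than $(b,C_2,y^t)+z_2$, but commutativity of addition modulo $\equiv$ (established earlier in the paper) repairs this instantly.
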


\begin{proof}
We have 
\begin{eqnarray*}
[a \;\;\; aA_1]
= 
\begin{array}{lr} 
[a & 0] \\ & 
\end{array} 
\left[ \begin{array}{lc} 
\I_m &  A_1 \\ 
 0^t & \I_n 
\end{array} \right] 
& , &
\left[ \begin{array}{lr} 
\I_m & A_2 \\ 
  0  & \I_n 
\end{array} \right]
\left[ \begin{array}{r} 
0 \\ 
x^t 
\end{array} \right] 
= 
\left[ \begin{array}{c} 
A_2 x^t \\ 
    x^t 
\end{array} \right] 
\makebox[15mm]{and}
\end{eqnarray*}
\begin{eqnarray*}
\left[ \begin{array}{cc} 
C_2 & 0    \\ 
0   & C_1 
\end{array} \right] 
\left[ \begin{array}{lc} 
\I_m & A_1 \\ 
0 & \I_n 
\end{array} \right] 
& = & 
\left[ \begin{array}{lc} 
\I_m &  A_2 \\ 
  0  & \I_n 
\end{array} \right] 
\left[ \begin{array}{cc} 
C_2 & 0 \\ 
 0  & C_1 
\end{array} \right] \, ,
\end{eqnarray*}
so by definition we have
\begin{eqnarray*}
\left( [a \;\;\; a A_1 ], 
\left[ \begin{array}{cc} C_2 & 0 \\ 0 & C_1 \end{array} \right] ,
\left[ \begin{array}{r} 0^t \\ x^t \end{array} \right] \right) 
& \equiv & 
\left( [a \;\;\; 0 ] ,
\left[ \begin{array}{cc} C_2 & 0 \\ 0 & C_1 \end{array} \right] ,
\left[ \begin{array}{c} A_2 x^t \\ x^t \end{array} \right] \right) 
\end{eqnarray*}
via $U_1 = 
\left[ \begin{array}{lc} 
\I_m &  A_1 \\ 
  0  & \I_n 
\end{array} \right] $ and
$U_2 =
\left[ \begin{array}{lr} 
\I_m & A_2 \\ 
  0  & \I_n 
\end{array} \right]$.
It follows from the definition of $\sim$
that $(a,C_2,0^t)$ and $(0,C_1,x^t)$
act as neutral elements for addition, and therefore
\begin{eqnarray*}
(aA_1,C_1,x^t )  & \sim & (a,C_2,0^t) + (aA_1,C_1,x^t)     \\
& = & \left( [a \;\;\; aA_1], 
\left[ \begin{array}{cc} C_2 & 0 \\ 0 & C_1 \end{array} \right] ,
\left[ \begin{array}{r} 0^t \\ x^t \end{array} \right] \right) \\
& \equiv & \left( [a \;\;\; 0] ,
\left[ \begin{array}{cc} C_2 & 0 \\ 0 & C_1 \end{array} \right] ,
\left[ \begin{array}{c} A_2 x^t \\ x^t \end{array} \right] \right)    \\
& = & (a,C_2,A_2x^t) + (0,C_1,x^t)                                      \\
& \sim & (a,C_2,A_2x^t)  \, .
\end{eqnarray*}
This completes the proof,
since, by definition, $\equiv$ implies $\sim$,
and $\sim$ is transitive.
\end{proof}

\newpage

\noindent
\textbf{Example 1}.
For any vectors $b,x$ and matrices $C,D \in \Sigma$
we have $(0,C,x^t) \geq (b,D,0^t)$ 
via the zero matrices of the appropriate size.
On the other hand, 
if $b \neq 0$ 
we cannot have $(b,D,0^t) \geq (0,C,x^t)$
since there does not exist a matrix $A_1$ with $b = 0 A_1$, 
and, similarly, if $x \neq 0$,
there does not exist a matrix $A_2$ with $x^t = A_2 0^t$. $\Box$

\bigskip

The above example shows that we need to introduce 
a right pseudo-Ore condition.
Note that because the matrices $U_1, U_2$ 
in the definition of the congruence relation $\equiv$ are invertible,
the relation $\equiv$ is in fact symmetric.

\begin{definition}  
Let $a \in R^n$, 
$C_1 \in {\Sigma}_n$, 
$x \in X^n$ and
$b \in R^m$, 
$C_2 \in {\Sigma}_m$, 
$y \in X^m$.
Suppose that there exist $n \times m$ matrices $A_1, A_2$ over $R$
and factorizations $x^t = A_1 y^t$, $b = aA_2$. 
If $C_1 A_2 = A_1 C_2$, 
then we write $(a,C_1,x^t) \leq (b,C_2,y^t)$.

In this case, we say that $(a,C_1,x^t) \leq (b,C_2,y^t)$
via $A_1, A_2$.
\end{definition}

The relation $\leq$ is reflexive, transitive, and respects addition
(the proofs are dual to those in Lemma~\ref{six}).
Part (b) of the next proposition establishes that
the right pseudo-Ore condition holds in $\Sigma^{-1} X$.

\begin{proposition}  
\label{nine}
Let $a \in R^n$, 
$C_1 \in {\Sigma}_n$, 
$x \in X^n$,
$b \in R^m$, 
$C_2 \in {\Sigma}_m$, 
$y \in X^m$.

\smallskip

\emph{(a)}
We have $(a,C_1,x^t) \geq (b,C_2,y^t)$
if and only if 
$(b,C_2,y^t) \leq (a,C_1,x^t)$.

\smallskip

\emph{(b)}
If $(a,C_1,x^t) \leq (b,C_2,y^t)$,
then $[a:C_1:x^t] = [b:C_2:y^t]$.
\end{proposition}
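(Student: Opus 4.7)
The plan is to handle the two parts separately, with (b) reducing to (a) together with Proposition~\ref{seven}.

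For part (a), I would simply unfold the two definitions side by side and observe that they encode the same system of equations, with the names of the two auxiliary matrices swapped. Spelled out: $(a,C_1,x^t) \geq (b,C_2,y^t)$ via $A_1, A_2$ (of size $m \times n$) requires $a = bA_1$, $y^t = A_2 x^t$, and $C_2 A_1 = A_2 C_1$. On the other side, $(b,C_2,y^t) \leq (a,C_1,x^t)$ asks for matrices $B_1, B_2$ (which are $m \times n$ in our ambient notation, because in the $\leq$ definition the roles of ``$n$'' and ``$m$'' get interchanged once the first triple becomes $(b,C_2,y^t)$) satisfying $y^t = B_1 x^t$, $a = bB_2$, and $C_2 B_2 = B_1 C_1$. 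Setting $B_1 = A_2$ and $B_2 = A_1$ (and conversely) makes these two data identical, so both implications of the biconditional are immediate.

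For part (b), I would chain (a) with Proposition~\ref{seven}: if $(a,C_1,x^t) \leq (b,C_2,y^t)$, then by (a) we have $(b,C_2,y^t) \geq (a,C_1,x^t)$, and Proposition~\ref{seven} yields the equality $[b:C_2:y^t] = [a:C_1:x^t]$.

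The main obstacle here is purely notational: the two definitions reuse the symbols $A_1, A_2$ for matrices that play opposite roles, and the shapes ($m \times n$ vs.\ $n \times m$) swap under transposition. I would therefore write out the translation between the two systems of equations carefully, rather than dismiss it as obvious, so that it is visible exactly how $\leq$ is the mirror image of $\geq$ and why no genuine symmetry argument is needed beyond matching the data.
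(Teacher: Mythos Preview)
Your proposal is correct and matches the paper's own proof essentially line for line: part~(a) is handled by unfolding the two definitions and observing that $(a,C_1,x^t)\geq(b,C_2,y^t)$ via $A_1,A_2$ is exactly $(b,C_2,y^t)\leq(a,C_1,x^t)$ via $A_2,A_1$, and part~(b) is deduced from~(a) together with Proposition~\ref{seven}. Your explicit caution about the interchange of sizes and the relabelling $B_1=A_2$, $B_2=A_1$ is precisely the ``careful application of the definitions'' the paper invokes.
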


\begin{proof}
(a)
By a careful application of the definitions,
$(a,C_1,x^t) \geq (b,C_2,y^t)$ via $A_1, A_2$
if and only if $a = bA_1$, $y^t = A_2 x^t$, and $C_2 A_1 = A_2 C_1$ 
and
$(b,C_2,y^t) \leq (a,C_1,x^t)$ via $B_1, B_2$
if and only if $y^t = B_1 x^t$, $a = bB_1$, and $C_2 B_2 = B_1 C_1$.
Thus $(a,C_1,x^t) \geq (b,C_2,y^t)$ via $A_1, A_2$
if and only if
$(b,C_2,y^t) \leq (a,C_1,x^t)$ via $A_2, A_1$.

(b)
This follows immediately from part (a) and Proposition~\ref{seven}.
\end{proof}

\begin{lemma}
\label{ten}
Let $a \in R^n$, $C \in {\Sigma}_n$, $x \in X^n$,
$b \in R^m$, $D \in {\Sigma}_m$, $y \in X^m$.
Then

\smallskip

\emph{(a)}
$(a,C,x^t) \geq (a,C,x^t) + (b,D,0^t)$ 
and 
$(a,C,x^t) + (0,D,y^t) \geq (a,C,x^t)$;

\smallskip

\emph{(b)}
$(a,C,x^t) + (b,D,0^t) \leq (a,C,x^t)$
and 
$(a,C,x^t) \leq (a,C,x^t) + (0,D,y^t)$. 
\end{lemma}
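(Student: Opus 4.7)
The plan is to exhibit explicit witness matrices for each assertion. Part (b) need not be proved independently: by Proposition~\ref{nine}(a), each $\leq$-statement in (b) is equivalent to the reverse of the corresponding $\geq$-statement in (a), so the same witnesses work with the roles of $A_1, A_2$ swapped. Thus the real content is in (a).

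For the first half of (a), $(a,C,x^t) \geq (a,C,x^t) + (b,D,0^t)$, the right-hand triple has ambient size $n+m$ (with matrix $\left[\begin{array}{cc} C & 0 \\ 0 & D \end{array}\right]$), so the required witnesses $A_1, A_2$ must be $(n+m)\times n$. I would take both to be the natural block embedding $\left[\begin{array}{c} \I_n \\ 0 \end{array}\right]$ into the top $n$ rows. Then $[a\;\;b]A_1 = a$, $A_2 x^t = \left[\begin{array}{c} x^t \\ 0^t \end{array}\right]$, and both $\left[\begin{array}{cc} C & 0 \\ 0 & D \end{array}\right]A_1$ and $A_2 C$ collapse to $\left[\begin{array}{c} C \\ 0 \end{array}\right]$, so the three defining conditions of $\geq$ hold.

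For the second half of (a), $(a,C,x^t) + (0,D,y^t) \geq (a,C,x^t)$, the roles are reversed: the left-hand triple now has size $n+m$ and the right-hand triple has size $n$, so the witnesses must be $n\times(n+m)$. The dual choice, namely the block projection $A_1 = A_2 = [\I_n \;\; 0]$ onto the first $n$ coordinates, discharges all three conditions by the same block calculations.

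The only real obstacle is bookkeeping: the direction of $\geq$ forces $A_1, A_2$ to have a specific shape, and it is the placement of the $0$-block, on the right in $(b,D,0^t)$ versus on the left in $(0,D,y^t)$, that lets the embedding witness work in one case and the projection witness in the other. Once these choices are made, the verifications are routine block multiplications and part (b) drops out of Proposition~\ref{nine}(a).
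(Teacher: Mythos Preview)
Your proof is correct and is essentially identical to the paper's argument: the paper also uses the block matrices $\left[\begin{smallmatrix}\I_n\\0\end{smallmatrix}\right]$ and $[\I_n\;\;0]$ as the witnessing pairs for the two halves of (a), and then deduces (b) from (a) via Proposition~\ref{nine}(a).
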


\begin{proof}
(a)
Since
$ \left[ \begin{array}{cc} C & 0 \\ 0 & D \end{array} \right]
\left[ \begin{array}{c} \I \\ 0 \end{array} \right] 
= \left[ \begin{array}{c} \I \\ 0 \end{array} \right] C$, 
by definition we have
\begin{eqnarray*}
(a,C,x^t) =
\left( \begin{array}{lr} [a & b] \\ & \end{array}
\left[ \begin{array}{c} \I \\ 0 \end{array} \right] , C , x^t \right)
& \geq &
\left( [a \;\;\; b] ,
\left[ \begin{array}{cc} C & 0 \\ 0 & D \end{array} \right] ,
\left[ \begin{array}{r} \I \\ 0 \end{array} \right] x^t \right) \\
& = & (a,C,x^t) + (b,D,0^t) \, .
\end{eqnarray*}

\newpage

Since 
$C \; [\I \;\;\; 0] 
= \begin{array}{lr} [\I & 0] \\ & \end{array}
\left[ \begin{array}{cc} C & 0 \\ 0 & D \end{array} \right]$, 
by definition we have
\begin{eqnarray*}
(a,C,x^t) + (0,D,y^t)
& = &
\left( a \; [\I \;\;\; 0] ,
\left[ \begin{array}{cc} C & 0 \\ 0 & D \end{array} \right] ,
\left[ \begin{array}{r} x^t \\ y^t \end{array} \right] \right) \\
& \geq &
\left( a,C, \begin{array}{lr} [\I & 0] \\ & \end{array}
\left[ \begin{array}{c} x^t \\ y^t \end{array} \right] \right)
= (a,C,x^t) \, .
\end{eqnarray*}

(b)
This follows immediately from part (a) and 
Proposition~\ref{nine}~(a).
\end{proof}

The following theorem shows that the left and right 
``pseudo-Ore'' conditions determine
the equivalence relation $\sim$
used in the construction of $\Sigma^{-1} X$.

\begin{theorem}
\label{eleven}
Let $n,m$ be positive integers, let
$a \in R^n$, $C \in {\Sigma}_n$, $x \in X^n$,
and let
$b \in R^m$, $D \in {\Sigma}_m$, $y \in X^m$.
The following conditions are equivalent:  

\smallskip

\emph{(1)}
$(a,C,x^t ) \sim (b,D,y^t )$;

\smallskip

\emph{(2)}
there exist 
$u \in R^k$, 
$E, F \in \Sigma_k$, 
$z \in X^k$,
for some positive integer $k$, such that
$(a,C,x^t) + (0,E,z^t) \geq (b,D,y^t) + (u,F,0^t)$;

\smallskip

\emph{(3)}
there exist triples $(a_1,C_1,x_1^t)$ and $(b_1,D_1,y_1^t)$ such that

\noindent
$(a,C,x^t ) \leq (a_1,C_1,x_1^t )$,
$(a_1,C_1,x_1^t ) \geq (b_1,D_1,y_1^t )$, and
$(b_1,D_1,y_1^t ) \leq (b,D,y^t )$.
\end{theorem}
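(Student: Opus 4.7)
The plan is to prove $(1) \Rightarrow (2) \Rightarrow (3) \Rightarrow (1)$, treating the two back-direction implications as routine and concentrating on $(1) \Rightarrow (2)$, which is the real content.

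The implication $(2) \Rightarrow (3)$ is immediate from Lemma~\ref{ten}(b): the second inequality there gives $(a,C,x^t) \leq (a,C,x^t)+(0,E,z^t)$, and the first gives $(b,D,y^t)+(u,F,0^t) \leq (b,D,y^t)$, so the triples $(a_1,C_1,x_1^t) := (a,C,x^t)+(0,E,z^t)$ and $(b_1,D_1,y_1^t) := (b,D,y^t)+(u,F,0^t)$ witness condition (3). The implication $(3) \Rightarrow (1)$ is likewise immediate: Proposition~\ref{nine}(b) converts each $\leq$ into an equality of $\sim$-classes, Proposition~\ref{seven} handles the middle $\geq$, and transitivity of $\sim$ finishes the job.

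For $(1) \Rightarrow (2)$, I would unwind the definition of $\sim$: there exist $z_1,z_2 \in \Sigma^{-1}X_0$ with $(a,C,x^t)+z_1 \equiv (b,D,y^t)+z_2$. Using commutativity of addition together with the description of $\Sigma^{-1}X_0$ just before Definition~4, I can rewrite each $z_i$ in the canonical form $(0,E_i,z_i^t) + (u_i,F_i,0^t)$ (inserting a trivial $(0,\I,0^t)$ or $(0,\I,0^t)$ summand if one of the two types is missing). Now I exploit the crucial observation that $\equiv$ via invertible $U_1, U_2$ is automatically $\geq$ via the same $U_1, U_2$, since the defining conditions coincide. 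Hence
\[
(a,C,x^t) + (0,E_1,z_1^t) + (u_1,F_1,0^t) \;\geq\; (b,D,y^t) + (0,E_2,z_2^t) + (u_2,F_2,0^t).
\]
Two applications of Lemma~\ref{ten}(a), one on each side of this inequality, let me strip the inner $(u_1,F_1,0^t)$ from the left and the inner $(0,E_2,z_2^t)$ from the right:
\[
(a,C,x^t) + (0,E_1,z_1^t) \;\geq\; (a,C,x^t) + (0,E_1,z_1^t) + (u_1,F_1,0^t),
\]
\[
(b,D,y^t) + (0,E_2,z_2^t) + (u_2,F_2,0^t) \;\geq\; (b,D,y^t) + (u_2,F_2,0^t).
\]
Chaining these three $\geq$ by the transitivity established in Lemma~\ref{six} yields $(a,C,x^t)+(0,E_1,z_1^t) \geq (b,D,y^t)+(u_2,F_2,0^t)$, which is exactly condition (2).

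The main obstacle is the step just described: recognizing that $\equiv$ is a degenerate case of $\geq$, and then choreographing the applications of Lemma~\ref{ten}(a) so that the ``wrong type'' of neutral summand is absorbed on each side. Once one sees that each side of the $\equiv$ has one absorbable summand and one surviving summand, and that these survive precisely into the $(0,E,z^t)$ slot on the left and the $(u,F,0^t)$ slot on the right, the whole implication collapses to a two-line computation. Everything else — reflexivity, transitivity, commutativity, and the passage from $\geq$/$\leq$ to equality of $\sim$-classes — is already available in the preceding lemmas and propositions.
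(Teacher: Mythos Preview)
Your argument is correct and follows essentially the same route as the paper: $(1)\Rightarrow(2)$ via $\equiv\,\Rightarrow\,\geq$ plus two uses of Lemma~\ref{ten}(a) and transitivity, $(2)\Rightarrow(3)$ via Lemma~\ref{ten}(b), and $(3)\Rightarrow(1)$ via Propositions~\ref{seven} and~\ref{nine}. The one technical point you skip is that condition~(2) requires $E,F\in\Sigma_k$ for the \emph{same} $k$, whereas your $E_1$ and $F_2$ may have different sizes; the paper fixes this by padding with copies of $(0,1,0)$ before stripping the unwanted summands.
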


\begin{proof}
(1) $\Rightarrow$ (2):
Suppose that $(a,C,x^t ) \sim (b,D,y^t )$.
Then by definition there exist triples
$(0,E,z^t)$, $(u_2,F_2,0^t)$,
$(0,E_2,z_2^t)$, $(u,F,0^t)$ such that
$$
(a,C,x^t) + (0,E,z^t) + (u_2,F_2,0^t)
\equiv
(b,D,y^t) + (0,E_2,z_2^t) + (u,F,0^t) \, .
$$
Since the relation $\equiv$ respects addition,
if $E \in \Sigma_j$ and $F \in \Sigma_k$ with $j < k$,
then we can add $k-j$ copies of $(0,1,0)$ 
to both $(0,E,z^t)$ and $(0,E_2,z_2^t)$ 
while maintaining the given identity.
A similar argument can be given if $j > k$,
so without loss of generality we can assume that $j = k$.

It follows from Lemma~\ref{ten}~(a) that
$$
(a,C,x^t) + (0,E,z^t) \geq 
(a,C,x^t) + (0,E,z^t) + (u_2,F_2,0^t) \,
$$
and that
$$
(b,D,y^t) + (0,E_2,z_2^t) + (u,F,0^t) \geq 
(b,D,y^t) + (u,F,0^t) \, .
$$
Since $\geq$ is transitive by Lemma~\ref{six}
and $\equiv$ implies $\geq$,
we have
$$
(a,C,x^t) + (0,E,z^t) \geq (b,D,y^t) + (u,F,0^t) \, .
$$

\newpage

(2) $\Rightarrow$ (3):
Given the triples $(0,E,z^t)$ and $(u,F,0^t)$ in condition (2),
let 
$$
(a_1,C_1,x_1^t) = (a,C,x^t) + (0,E,z^t)
$$
and 
$$
(b_1,D_1,y_1^t) = (b,D,y^t) + (u,F,0^t) \, .
$$
Then 
$(a,C,x^t ) \leq (a_1,C_1,x_1^t)$ by Lemma~\ref{ten}~(b),
$(a_1,C_1,x_1^t ) \geq (b_1,D_1,y_1^t )$ by hypothesis, and
$(b_1,D_1,y_1^t ) \leq (b,D,y^t )$ by Lemma~\ref{ten}~(a).

(3) $\Rightarrow$ (1):
This follows immediately from 
Propositions~\ref{nine} and \ref{seven}.
\end{proof}

In the following diagram, 
we denote $ \leq $ 
by showing the first triple below the second.
Then $(a,C,x^t) \sim (b,D,y^t)$ if and only if there
there exist triples $(a_1,C_1,x_1^t)$ and $(b_1,D_1,y_1^t)$ 
such that the following relationship holds.

\begin{picture}(320,90)
\put(50,0){\begin{picture}(200,90)
     \put(75,10){\begin{picture}(30,90)
	  \put(20,10){\makebox(0,0){$(a,C,x^t)$}}
     \end{picture}}
     \put(85,10){\begin{picture}(50,90)
          \put(10,30){\makebox(0,0){$\leq$}}
          \put(13,20){\vector(1,1){30}}
     \end{picture}}
     \put(115,10){\begin{picture}(30,70)
          \put(20,60){\makebox(0,0){$(a_1,C_1,x_1^t)$}}
     \end{picture}}
     \put(125,10){\begin{picture}(30,90)
          \put(10,50){\vector(1,-1){30}}
          \put(15,30){\makebox(0,0){$\geq$}}
     \end{picture}}
     \put(160,10){\begin{picture}(30,90)
	  \put(10,10){\makebox(0,0){$(b_1,D_1,x_1^t)$}}
     \end{picture}}
     \put(160,10){\begin{picture}(50,90)
          \put(35,30){\makebox(0,0){$\leq$}}
          \put(13,20){\vector(1,1){30}}
     \end{picture}}
     \put(195,0){\begin{picture}(30,30)
     \put(15,70){\makebox(0,0){$(b,D,y^t)$}}
     \end{picture}}
\end{picture}}
\end{picture}

Corollary~7.11.9 of \cite{COHN85} states if $r \in R$,
then $r \in \ker ( \lambda)$
if and only if 
for some $C, D \in \Sigma$
there is a relation of the form
$$
\left[ \begin{array}{cc} 
    0  &  r  \\
    0  &  0 
\end{array} \right] 
=
\left[ \begin{array}{cc} 
    A_{11}  &  A_{12}  \\
    C       &  A_{22} 
\end{array} \right] 
\left[ \begin{array}{cc} 
    B_{11}  &  B_{12}  \\
    D       &  B_{22} 
\end{array} \right]  \, .
$$
We call this Gerasimov's criterion,
since it has been developed in \cite{GERASIMOV82}.
Using Theorem~\ref{eleven},
we can extend it to modules.
We let $\mu_X$ denote the canonical mapping
$\mu_X : X \rightarrow \Sigma^{-1} X$
defined by setting $\mu_X (x) = [1:1:x]$, for all $x \in X$.

\begin{theorem}
\label{twelve}
The following conditions are equivalent for the inversive set $\Sigma$
and $x \in X$:

\smallskip

\emph{(1)}
$x \in \ker ( \mu_X )$, for the canonical mapping
$\mu_X : X \rightarrow \Sigma^{-1} X$;

\smallskip

\emph{(2)}
there exist 
$a \in R^n$,
$z \in X^n$,
and $C, D \in \Sigma_n$, 
for some $n > 0$, such that
$$
(1,1,0) + (0,D,z^t) \geq (1,1,x) + (a,C,0^t) \, ;
$$


\emph{(3)}
there exist relations of the form
$$
\left[ \begin{array}{c} 
    x   \\
    0^t      
\end{array} \right] 
=
\left[ \begin{array}{cc} 
    a_{11}  &  a_{12}  \\
    C       &  A 
\end{array} \right] 
\left[ \begin{array}{c} 
    y^t   \\
    z^t      
\end{array} \right] 
\makebox[10mm]{and}
\left[ \begin{array}{cc} 
    a_{11}  &  a_{12}  \\
    C       &  A 
\end{array} \right] 
\left[ \begin{array}{c} 
    B   \\
    D      
\end{array} \right] 
=
\left[ \begin{array}{c} 
    0   \\
    0      
\end{array} \right] 
$$
for vectors $a_{11}, a_{12}$ over $R$,
$y, z$ over $X$,
and matrices $A, B, C, D$ such that $C, D \in \Sigma$.
\end{theorem}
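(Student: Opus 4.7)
The plan is to prove (1)$\Leftrightarrow$(2) by direct application of Theorem~\ref{eleven}, then (2)$\Rightarrow$(3) by unfolding the $\geq$-relation into block matrix identities, and finally (3)$\Rightarrow$(1) via the universal property of $R_\Sigma$; these implications close the cycle. Since $(1,1,0)$ has the form $(b,D,0^t)$ with $b=D=1$, it lies in $\Sigma^{-1}X_0$ and therefore represents the zero of $\Sigma^{-1}X$; hence $\mu_X(x)=0$ if and only if $(1,1,x)\sim(1,1,0)$. By symmetry of $\sim$ this reads $(1,1,0)\sim(1,1,x)$, and the equivalence (1)$\Leftrightarrow$(2) of Theorem~\ref{eleven} applied to this congruence yields precisely condition~(2) of the present theorem (after renaming the auxiliary data), proving (1)$\Leftrightarrow$(2).

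For (2)$\Rightarrow$(3), I would write the two sums in (2) out explicitly. The relation $(1,1,0)+(0,D,z^t)\geq(1,1,x)+(a,C,0^t)$ asserts the existence of $(n{+}1)\times(n{+}1)$ matrices $A_1,A_2$ over $R$ satisfying $[1\;\;0]=[1\;\;a]A_1$, $\left[\begin{array}{c}x\\0^t\end{array}\right]=A_2\left[\begin{array}{c}0\\z^t\end{array}\right]$, and $\left[\begin{array}{cc}1&0\\0&C\end{array}\right]A_1=A_2\left[\begin{array}{cc}1&0\\0&D\end{array}\right]$. Partitioning $A_1$ and $A_2$ into $(1{+}n)\times(1{+}n)$ blocks and reading off the equations coming from the bottom-right $n\times n$ blocks $P_2,Q_2$ together with the top-right row $q_2$ of $A_2$ yields $aP_2+q_2D=0$, $CP_2=Q_2D$, $q_2z^t=x$, and $Q_2z^t=0^t$. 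Setting $a_{11}=a$, $a_{12}=q_2$, $B=P_2$, $A=-Q_2$, and $y=0$ converts these identities into the matrix equations of~(3).

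For (3)$\Rightarrow$(1), I would invoke the natural isomorphism $\Sigma^{-1}X\cong R_\Sigma\otimes_R X$ recorded in the paper. Applying $\lambda$ entrywise to the $R$-matrices and $\mu_X$ entrywise to the $X$-vectors in (3) yields identities in $\Sigma^{-1}X$ in which $\lambda(C)$ and $\lambda(D)$ are invertible; solving the second matrix equation gives $\lambda(a_{12})=-\lambda(a_{11})\lambda(B)\lambda(D)^{-1}$ and $\lambda(A)=-\lambda(C)\lambda(B)\lambda(D)^{-1}$, and the first then forces $\mu_X(y^t)=\lambda(B)\lambda(D)^{-1}\mu_X(z^t)$, whence $\mu_X(x)=\lambda(a_{11})\mu_X(y^t)+\lambda(a_{12})\mu_X(z^t)=0$. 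The principal bookkeeping lies in step~2, in matching block partitions of $A_1,A_2$ with the matrices of~(3) and tracking signs; step~3 is conceptually straightforward once the universal property of $R_\Sigma$ is invoked, and the main conceptual input is that Theorem~\ref{eleven} already provides a concrete form of the equivalence $\sim$ which, when specialized to the triples $(1,1,x)$ and $(1,1,0)$, captures the module version of Gerasimov's criterion.
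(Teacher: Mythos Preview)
Your argument is correct. The implications $(1)\Leftrightarrow(2)$ and $(2)\Rightarrow(3)$ match the paper's proof almost verbatim: Theorem~\ref{eleven} is invoked for the first, and the block decomposition of $A_1,A_2$ together with the substitution $a_{12}=b_{12}D$ produces the matrix identities of~(3) with $y=0$, exactly as you do.

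Where you diverge is in $(3)\Rightarrow(1)$. The paper stays entirely inside the triple calculus: from $CB=-AD$ and $-Az^t=Cy^t$ it applies the left pseudo-Ore condition (Proposition~\ref{seven}) several times to rewrite $(a_{11},\I_n,y^t)+(a_{12},\I_m,z^t)$ as a sum lying in $\Sigma^{-1}X_0$, and then observes that this sum is $\sim(1,1,x)$. You instead pass through the recorded isomorphism $\Sigma^{-1}X\cong R_\Sigma\otimes_R X$ and compute in the tensor product, using invertibility of $\lambda(C),\lambda(D)$ to solve for $\mu_X(y^t)$ and cancel. Your route is shorter and more transparent, but it imports a nontrivial external fact (the isomorphism with the tensor product, established in \cite{BEACHY93}); the paper's route is longer but self-contained within the pseudo-Ore framework that the paper is trying to showcase, and it illustrates concretely how the relations $\geq$ and $\sim$ interact. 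Both are valid; the choice reflects whether one views Theorem~\ref{twelve} as an application of the universal property or as an internal statement about the triple construction.
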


\newpage

\begin{proof}
(1) $\Rightarrow$ (2):
Since $x \in \ker ( \mu_X )$ if and only if $(1,1,0) \sim (1,1,x)$,
this is a direct application of Theorem~\ref{eleven}.

(2) $\Rightarrow$ (3):
Suppose that
$(1,1,0) + (0,D,z^t) \geq (1,1,x) + (a,C,0^t)$
via $A_1, A_2$.
Writing $A_1$ and $A_2$ in block form, there exist
$a_{11}, b_{11} \in R$,
$a_{12}, a_{21}, b_{12}, b_{21} \in R^n$, and
$A,B \in M_n (R)$ such that
$$
[1 \;\; 0] = 
\begin{array}{lr} [1 & a] \\ &  \end{array}
\left[ \begin{array}{cc} a_{11} & a_{12} \\ a_{21}^t & A \end{array} \right]
\makebox[8mm]{}
\left[ \begin{array}{cc} b_{11} & b_{12} \\ b_{21}^t & B \end{array} \right]
\left[ \begin{array}{c} 0 \\ z^t \end{array} \right] =
\left[ \begin{array}{c} x \\ 0^t \end{array} \right] 
$$
and
$$
\left[ \begin{array}{cc} 1 & 0 \\ 0 & C \end{array} \right]
\left[ \begin{array}{cc} a_{11} & a_{12} \\ a_{21}^t & A \end{array} \right]
=
\left[ \begin{array}{cc} b_{11} & b_{12} \\ b_{21}^t & B \end{array} \right]
\left[ \begin{array}{cc} 1 & 0 \\ 0 & D \end{array} \right] \, .
$$
This gives us the following equations:
\begin{center}
$\begin{array}{cccc}
\;\; a_{11} + a \cdot a_{21}^t = 1 \;\; &
\;\; a_{12} + aA = 0 \;\; &
\;\; b_{12} \cdot z^t = x \;\; &
\;\; B z^t = 0^t \;\; \\
&&& \\
a_{11} = b_{11} &
a_{12} = b_{12} D &
C a_{21}^t = b_{21}^t &
CA = BD \, .
\end{array}$
\end{center}
Substituting $a_{12} = b_{12}D$ 
in the equation $a_{12} + aA = 0$,
we only need to use the following equations
in order to obtain the desired result:
\begin{center}
$\begin{array}{cccc}
\;\; aA + b_{12}D = 0 \;\; &
\;\; CA - BD = 0 \;\; &
\;\; b_{12} \cdot z^t = x \;\; &
\;\; B z^t = 0^t \, .
\end{array}$
\end{center}
These equations show that 
$$
\left[ \begin{array}{cc} a & b_{12} \\ C & -B \end{array} \right]
\left[ \begin{array}{c} A \\ D \end{array} \right] 
=
\left[ \begin{array}{c} 0 \\ 0 \end{array} \right] 
\makebox[10mm]{and}
\left[ \begin{array}{cc} a & b_{12} \\ C & -B \end{array} \right]
\left[ \begin{array}{c} 0 \\ z^t \end{array} \right] 
=
\left[ \begin{array}{c} x \\ 0^t \end{array} \right] \, .
$$

(3) $\Rightarrow$ (1):
Suppose that there are relations of the form
$$
\left[ \begin{array}{c} 
    x   \\
    0^t      
\end{array} \right] 
=
\left[ \begin{array}{cc} 
    a_{11}  &  a_{12}  \\
    C       &  A 
\end{array} \right] 
\left[ \begin{array}{c} 
    y^t   \\
    z^t      
\end{array} \right] 
\makebox[10mm]{and}
\left[ \begin{array}{cc} 
    a_{11}  &  a_{12}  \\
    C       &  A 
\end{array} \right] 
\left[ \begin{array}{c} 
    B   \\
    D      
\end{array} \right] 
=
\left[ \begin{array}{c} 
    0   \\
    0      
\end{array} \right] \, ,
$$
where 
$C \in \Sigma_n$. $D \in \Sigma_m$,
$A, B$ are $n \times m$ matrices over $R$,
$a_{11} \in R^n$, $a_{12} \in R^m$,
$y \in X^n$, and $z \in X^m$.
Then $CB = - AD$,
and so it follows from the left pseudo-Ore condition that
$$
(-a_{11}B,D,z^t) \sim (-a_{11},C,-Az^t) \, .
$$

Since $-a_{11}B = a_{12}D$, we have
$$
(-a_{11}B,D,z^t) =
(a_{12}D,\I_m D,z^t) \sim
(a_{12},\I_m,z^t) \, ,
$$

\newpage

\noindent
by an easy application of the left pseudo-Ore condition.
Similarly, we have
$- A z^t = C y^t$, 
and so
$$
(a_{11},C,-Az^t)
=
(a_{11},C \I_n,Cy^t)
\sim
(a_{11}, \I_n,y^t) \, .
$$
Therefore the sum
$$
(a_{11},\I_n,y^t) + (a_{12},\I_m,z^t)
\sim
(a_{11},C,-Az^t) + (-a_{11},C,-Az^t)
\sim
(a_{11} - a_{11},C,-Az^t)
$$
must belong to $\Sigma^{-1} X_0$.

It follows easily from the left pseudo-Ore condition
that $(a,I_n,y^t) \geq (1,1,a \cdot y^t)$
and $(a,C,y^t) + (a,C,z^t) \geq (a,C,(y+z)^t)$,
for any $a \in R^n$, $C \in \Sigma_n$, 
and $y, z \in X^n$.
We conclude that 
$$
(1,1,x) 
= 
(1,1,a_{11} \cdot y^t + a_{12} \cdot z^t)
\sim
(1,1,a_{11} \cdot y^t) + (1,1,a_{12} \cdot z^t)
\sim
(a_{11},\I_n,y^t) + (a_{12},\I_m,z^t)
$$
belongs to $\Sigma^{-1} X_0$, 
and therefore $x \in \ker ( \mu_X )$.
\end{proof}

\bigskip

\newpage

\noindent
MR 2020 Subject Classification: 16S10

\noindent
Keywords: universal localization, multiplicative set of matrices

\bigskip

\noindent
John A. Beachy \\
Department of Mathematical Sciences \\
Northern Illinois University \\ 
DeKalb, IL 60115, USA \\
email: jbeachy@niu.edu


\begin{thebibliography}{14}
 
\bibitem{BEACHY81}  
J.\ A.\ Beachy,
Inversive localization at semiprime Goldie ideals,
\emph{Manuscripta Math.} \textbf{34} 
(1981) 211--239.

\bibitem{BEACHY93} 
J.\ A.\ Beachy, 
On universal localization at semiprime Goldie ideals,
\emph{Ring Theory,
Proceedings of the Biennial Ohio State--Denison Conference,
May, 1992},
S.\ K.\ Jain and S.\ Tariq Rizvi, editors,
World Scientific: Singapore, New Jersey, London, Hong Kong, 1993,
pp.\ 41--57.
  
\bibitem{COHN85}
P.\ M.\ Cohn,
\emph{Free Rings and Their Relations}, 
2nd Ed., Academic Press: London--New York, 1985.
  
\bibitem{GERASIMOV82}
V.\ N.\ Gerasimov,
Localizations in associative rings (in Russian),
\emph{Sibirsk.\ Mat.\ Zh.} \textbf{23} 
(1982), 36--54.

\bibitem{LAM98}
T.\ Y.\ Lam,
\emph{Lectures on Modules and Rings},
Graduate Texts in Mathematics, vol.\ 189,
Springer--Verlag: Berlin--Heidelberg--New York, 1998.

\bibitem{MALCOLMSON82}
P.\ Malcolmson,
Construction of universal matrix localizations,  
Lecture Notes in Math.\ No.\ 951,  
Springer--Verlag: Berlin--Heidelberg--New York, 1982, 
pp.\ 117--131.

\end{thebibliography}
\end{document}